\newtheorem{theorem}{Theorem}[section]
\newtheorem{lemma}[theorem]{Lemma}
\newtheorem{corollary}[theorem]{Corollary}
\theoremstyle{remark}
\newtheorem{remark}[theorem]{Remark}
\newtheorem{definition}[theorem]{Definition}            
\newtheorem{example}[theorem]{Example}
\begin{document}
\title[Derivations preserving quasinilpotent elements]{Derivations preserving quasinilpotent elements}

\author{J. Alaminos}
\author{M. Bre\v{s}ar}
\author{J. Extremera}
\author{\v S. \v Spenko}
\author{A.\,R. Villena}
\address{J. Alaminos, J. Extremera,  and A.~R. Villena, Departamento de An\' alisis
Matem\' atico, Facultad de Ciencias, Universidad de Granada,
 Granada, Spain} \email{alaminos@ugr.es, jlizana@ugr.es,
avillena@ugr.es}
\address{
M. Bre\v{s}ar, Faculty of Mathematics and Physics, University of Ljubljana, and Faculty of
Natural Sciences and Mathematics, University of Maribor, Slovenia}
\email{matej.bresar@fmf.uni-lj.si}
\address{\v S. \v Spenko,  Institute of  Mathematics, Physics, and Mechanics,  Ljubljana, Slovenia} 
\email{spela.spenko@imfm.si}

\thanks{The first, third and fifth author were supported
by MINECO Grant MTM2012--31755
and Junta de Andaluc\'ia Grants FQM-185 and 
P09-FQM-4911. The second and fourth author  were supported by ARRS Grant P1--0288.}

\keywords{Banach algebra, derivation, quasinilpotent element, spectrum.}
\thanks{2010 {\em Math. Subj. Class.} 46H05, 46H15, 47B47, 47B48. }

\begin{abstract}
We consider a  Banach algebra $A$  with the property that, roughly speaking, sufficiently many irreducible representations  of $A$ on nontrivial Banach spaces  do not vanish on all square zero elements. The class of Banach algebras with this property 
 turns out to be quite large --  it includes 
  $C^*$-algebras, group algebras on arbitrary locally compact groups, commutative algebras, $L(X)$ for any Banach space $X$, and various other examples.
Our main result states  that every derivation of $A$  that preserves the set of quasinilpotent elements has its range in the radical of $A$.
\end{abstract}

\maketitle
\section{Introduction}

Let $A$ be a Banach algebra.
The spectrum of an element $a$ in $A$ will be denoted by  $\sigma(a)$. By $Q=Q_A$ 
we denote the set of all quasinilpotent elements in $A$, i.e., $Q= \{q\in A\,|\, 
\sigma(q)=\{0\}\}$, and by  ${\rm rad}(A)$ we denote the (Jacobson)  
radical of $A$. Recall that ${\rm rad}(A) = \{q\in A\,|\, qA\subseteq Q\}$.

Let $d$ be a derivation of $A$. It is well-known that $d(A)\subseteq {\rm rad}(A)$ 
if $A$ is commutative; under the assumption that $d$ is continuous this was proved by 
Singer and Wermer \cite{SW}, and without this assumption considerably later by Thomas 
\cite{Th}. This result has been extended to  noncommutative algebras  in various 
directions. For instance, Le Page 
\cite{LP} proved  that $d(A)\subseteq Q$ implies  $d(A)\subseteq {\rm rad}(A)$ in 
case $d$ is an inner derivation. For a general derivation $d$ this was established 
somewhat later by Turovskii and Shulman 
\cite{ST} (and independently in \cite{MM}). In \cite{BM} it was proved that 
$d(A)\subseteq {\rm rad}(A)$ 
in case there exists $M > 0$ such that $r(d(x))\le Mr(x)$ for each $x\in A$, where 
$r(\,.\,)$ stands for 
the spectral radius. Katavolos and Stamatopoulos    \cite{KS} showed that if $d$ is an 
inner derivation implemented 
by a quasinilpotent element, then $d(Q)\subseteq Q$ implies $d(A)\subseteq {\rm rad}(A)$.

\emph{Does $d(Q)\subseteq Q$ implies $d(A)\subseteq {\rm rad}(A)$   for  an arbitrary derivation $d$ 
of $A$?} This question seems natural since 
 the condition $d(Q)\subseteq Q$ with $d$ arbitrary covers all  conditions from the preceding paragraph.
 However, in general the answer is negative since $Q$ can be $\{0\}$ 
even when $A$ is  noncommutative \cite{DT}, and in such a case  every nonzero inner derivation of $A$ 
gives rise to  a counterexample.
One is therefore forced to confine to special classes of Banach algebras.  Our main result, 
Theorem \ref{MT}, states that the answer to the above question is positive in case  $A$ has the property 
$\beta$ from  Definition \ref{def1} below. 
 There are some obvious examples of algebras with this  property, say commutative algebras and the algebra $L(X)$ of
 continuous linear operators on any Banach space $X$. Our main point, however, is that the so-called algebras with the 
 property $\mathbb B$, introduced in the recent paper \cite{ABEV}, also have the property $\beta$ (Example \ref{exB}).  
 The class of  algebras for which the above question has a positive answer is therefore rather large, in particular it 
 contains   $C^*$-algebras, group algebras on arbitrary locally compact groups, and Banach algebras generated by idempotents.

\section{The property $\beta$} 

We will deal with the  class of Banach algebras having the following property.

\begin{definition} \label{def1} 
A  Banach algebra $A$ is said to have the {\em property $\beta$}  if there exists a family of 
continuous irreducible representations $(\pi_i)_{i\in I}$ of $A$ on Banach spaces $X_i$ such that 
\begin{enumerate}
\item[(a)] $ \bigcap_i \ker\pi _i = {\rm rad}(A)$.
\item[(b)]  If  $\dim X_i \ge 2$, then  there exists $q\in A$ such that $q^2=0$ and $\pi_i(q)\ne 0$.
\end{enumerate} 
\end{definition} 

\begin{example} 
Every commutative Banach algebra obviously  has the property $\beta$.
\end{example}

\begin{example} 
For every Banach space $X$, the Banach algebra 
$L(X)$ has the property $\beta$.
Indeed, just take $\pi=1$ and a nonzero finite rank nilpotent for $q$. 
More generally, a primitive Banach algebra with nonzero socle has the property $\beta$.
\end{example}

\begin{example} \label{exB} 
A Banach algebra $A$ is said to have the {\em property $\mathbb{B}$} if every
continuous bilinear map $\varphi\colon A\times A\to X$, where $X$ is
an arbitrary Banach space, with the property that for all $a,b\in A$,
\begin{equation*}
 ab=0 \ \ \Longrightarrow \ \ \varphi(a,b)=0,
\end{equation*}
necessarily satisfies
\[
\varphi(ab,c)=\varphi(a,bc) \ \ \ \ (a,b,c\in A).
\]
The class of Banach algebras with the property $\mathbb{B}$ is quite  large.
It includes $C^*$-algebras, group algebras on arbitrary locally compact groups, 
Banach algebras generated by idempotents, and topologically simple Banach algebras 
containing a nontrivial idempotent. Furthermore, this class is stable under the usual 
methods of constructing Banach algebras. For details we  refer the reader to 
 \cite{ABEV}.

We claim that 
\begin{center}
$A$ has the property $\mathbb B \ \Longrightarrow \ A$ has the property $\beta$.
\end{center}
Indeed, take a  continuous irreducible representation $\pi$ of a Banach algebra $A$ with 
the property $\mathbb B$ on a Banach space $X$ with $\dim(X)\ge 2$. It is enough to show 
that there exist   $a,b\in A$ such that 
\begin{equation*}
ab=0, \ \pi(a)\ne 0, \ \pi(b)\ne 0.
\end{equation*} 
Namely, since $\pi(A)$ is
a prime algebra, we can  find $c\in A$ such that $\pi(b)\pi(c)\pi(a)\ne 0$.
Hence   $q=bca$ satisfies $q^2=0$ and $\pi(q)\ne 0$, as required in  Definition \ref{def1}. Assume, therefore,  that such $a$ and $b$ do not exist. That is, for all 
 $a,b\in A$,  $ab=0$ implies
$\pi(a)=0$ or $\pi(b)=0$. Then the continuous bilinear mapping 
\[
\varphi\colon A\times A\to L(X)\widehat{\otimes} L(X), \ \
\varphi(a,b)=\pi(a)\otimes\pi(b) \ \ (a,b\in A)
\]
satisfies the condition $
 ab=0 \Longrightarrow \varphi(a,b)=0$. Here $\widehat{\otimes}$ stands for the projective tensor product.
Consequently, we have
\[
\pi(a)\pi(b)\otimes\pi(c)=\pi(a)\otimes\pi(b)\pi(c) \ \ \ \ (a,b,c\in A).
\]
Let $\xi,\zeta\in X\setminus\{0\}$. There exist $a,b\in A$ such that
$\pi(a)\xi=\zeta$ and $\pi(b)\zeta=\xi$. Then
$\pi(a)\pi(b)\otimes\pi(a)=\pi(a)\otimes\pi(b)\pi(a)$ and
both $\pi(a)$ and $\pi(b)\pi(a)$ are different from zero.
This implies that there exists $\lambda\in\mathbb{C}$ such
that $\pi(a)=\lambda\pi(b)\pi(a)$. Hence
\[
\zeta=\pi(a)\xi=\lambda\pi(b)\pi(a)\xi=\lambda\pi(b)\zeta=\lambda\xi.
\]
From this  we conclude that $\dim(X)=1$, a contradiction.
\end{example}

\begin{example}\label{red}
Let $A$ have the property $\beta$ and let $(\pi_i)_{i\in I}$ be the corresponding representations.
The following constructions will be used later.
\begin{enumerate}
\item
The quotient Banach algebra $A/\text{\rm rad}(A)$ also has the property $\beta$.
 Indeed, for every $i\in I$ the representation $\pi_i$ 
drops to an irreducible representation $\varpi_i$ of the quotient Banach 
algebra $A/\text{\rm rad}(A)$ on $X_i$ by defining
\[
\varpi_i(a+\text{\rm rad}(A))=\pi_i(a) \ \ \ \ (a\in A).
\] 
It is clear that $(\varpi_i)_{i\in I}$ satisfies the required properties.
\item
Assume that $A$ does not have  an identity element. 
Let $A_\mathbf{1}$ be the Banach algebra formed by adjoining an identity to 
$A$, so that $A_\mathbf{1}=\mathbb{C}\mathbf{1}\oplus A$. For every $i\in I$,
the representation $\pi_i$ lifts to an irreducible representation $\varpi_i$ 
of $A_\mathbf{1}$ on $X_i$ by defining
\[
\varpi_i(\alpha\mathbf{1}+a)\xi \ = \ \alpha\xi+\pi_i(a)\xi \ \ \ \ 
(\alpha\in\mathbb{C}, \, a\in A, \, \xi\in X_i).
\]
Further, we adjoin the $1$-dimensional representation
$\varpi(\alpha\mathbf{1}+a)=\alpha$ $(\alpha\in\mathbb{C}, a\in A)$
to the family $(\varpi_i)_{i\in I}$. Then the resulting family satisfies the
requirements of Definition~\ref{def1}. That is,  $A_\mathbf{1}$ has the property $\beta$.
\end{enumerate}
\end{example}

\section{Tools}

The purpose of this section is to gather together the results needed for the proof of Theorem \ref{MT} below.
We start with a simple  lemma which indicates that  it is enough  to consider the condition $d(Q)\subseteq Q$ on
semisimple Banach algebras.

\begin{lemma}\label{ln}
Let $A$ be a Banach algebra and let $d$ be a derivation of $A$ 
such that $d(Q)\subseteq Q$. Then $d(\text{\rm rad}(A))\subseteq \text{\rm rad}(A)$
and the derivation $D$ of the semisimple Banach algebra $A/\text{\rm rad}(A)$,  
defined by $D(x+\text{\rm rad}(A))=d(x)+\text{\rm rad}(A)$, satisfies 
$D(Q_{A/\text{\rm rad}(A)})\subseteq Q_{A/\text{\rm rad}(A)}$.
\end{lemma}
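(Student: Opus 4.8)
The plan is to prove the two assertions in turn, the first being the crux. For the first, I want to show $d(\mathrm{rad}(A)) \subseteq \mathrm{rad}(A)$. The key observation is that $\mathrm{rad}(A)$ consists of quasinilpotent elements, so $d(\mathrm{rad}(A)) \subseteq d(Q) \subseteq Q$; but membership in $Q$ alone is not enough to conclude membership in $\mathrm{rad}(A)$. The trick is to exploit the Leibniz rule: for $r \in \mathrm{rad}(A)$ and any $a \in A$, we have $d(ra) = d(r)a + r\,d(a)$. Since $r \in \mathrm{rad}(A)$ is a two-sided ideal, $ra \in \mathrm{rad}(A) \subseteq Q$, so $d(ra) \in Q$, and likewise $r\,d(a) \in \mathrm{rad}(A) \subseteq Q$. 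The quotient map $\pi\colon A \to A/\mathrm{rad}(A)$ satisfies $\pi(d(ra)) = \pi(d(r)a)$ because $\pi(r\,d(a)) = 0$. Now $d(ra) \in Q$ and $r\,d(a) \in Q$; however, $Q$ is not closed under addition in general, so I cannot directly conclude $d(r)a \in Q$ from $d(r)a = d(ra) - r\,d(a)$.

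Instead I will argue via the characterization $\mathrm{rad}(A) = \{q \in A \mid qA \subseteq Q\}$ more carefully. Fix $r \in \mathrm{rad}(A)$; I must show $d(r)A \subseteq Q$, equivalently (using quasinilpotency and the spectral radius being submultiplicative-ish under the right conditions) that each $d(r)a$ is quasinilpotent. The cleaner route: since $\mathrm{rad}(A)$ is an ideal, $r a \in \mathrm{rad}(A)$ for all $a$, hence $d(ra) \in Q$. Working modulo the radical with $D$ the induced map on $A/\mathrm{rad}(A)$: we have $0 = \pi(d(ra)) - \pi(d(ra))$; more to the point, $d(r)a = d(ra) - r\,d(a)$ and modulo $\mathrm{rad}(A)$ this reads $\pi(d(r))\pi(a) = \pi(d(ra))$. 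I then want to say $\pi(d(ra))$ is quasinilpotent in $A/\mathrm{rad}(A)$ since $d(ra) \in Q_A$ and quasinilpotency is preserved by the (continuous) quotient homomorphism — indeed $\sigma_{A/\mathrm{rad}(A)}(\pi(x)) \subseteq \sigma_A(x)$ for any homomorphism, so $x \in Q_A \Rightarrow \pi(x) \in Q_{A/\mathrm{rad}(A)}$. Therefore $\pi(d(r))\pi(a) \in Q_{A/\mathrm{rad}(A)}$ for every $a$, so $\pi(d(r)) \in \mathrm{rad}(A/\mathrm{rad}(A)) = \{0\}$ because $A/\mathrm{rad}(A)$ is semisimple. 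Hence $d(r) \in \mathrm{rad}(A)$, establishing the invariance.

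For the second assertion, once $d(\mathrm{rad}(A)) \subseteq \mathrm{rad}(A)$ is known, the induced map $D$ on $A/\mathrm{rad}(A)$ defined by $D(x + \mathrm{rad}(A)) = d(x) + \mathrm{rad}(A)$ is well-defined and is readily checked to be a derivation (the Leibniz rule passes to the quotient). It remains to verify $D(Q_{A/\mathrm{rad}(A)}) \subseteq Q_{A/\mathrm{rad}(A)}$. Given $\bar q \in Q_{A/\mathrm{rad}(A)}$, pick a representative $q \in A$ with $\pi(q) = \bar q$. The point is that $\sigma_{A/\mathrm{rad}(A)}(\pi(q)) = \sigma_A(q)$ — equality, not just inclusion — because for any ideal $J$ contained in the radical, $a$ is invertible in $A$ iff $a + J$ is invertible in $A/J$ (the radical consists of elements in every maximal modular ideal, and $1 + \mathrm{rad}(A)$-perturbations of invertibles are invertible). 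Hence $\sigma_A(q) = \{0\}$, i.e., $q \in Q_A$, so $d(q) \in Q_A$ by hypothesis, so $\pi(d(q)) = D(\bar q) \in Q_{A/\mathrm{rad}(A)}$ by the spectrum-inclusion remark above.

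The main obstacle, and the step requiring the most care, is the first one: resisting the temptation to add quasinilpotent elements, and instead routing everything through the ideal structure so that the radical-modulo argument and the semisimplicity of $A/\mathrm{rad}(A)$ do the work. The spectral facts invoked — that a homomorphism cannot enlarge the spectrum, and that passing to the quotient by (a subset of) the radical leaves spectra unchanged — are standard, so modulo citing them the argument is short; the only genuinely delicate bookkeeping is making sure the Leibniz computation $d(ra) = d(r)a + r\,d(a)$ with $ra \in \mathrm{rad}(A)$ is deployed for every $a \in A$ so as to conclude $\pi(d(r)) \in \mathrm{rad}(A/\mathrm{rad}(A))$.
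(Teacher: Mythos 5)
Your argument is correct and is essentially the paper's proof in unpacked form: the paper packages your Leibniz computation as the statement that $\bigl(d(\mathrm{rad}(A))+\mathrm{rad}(A)\bigr)/\mathrm{rad}(A)$ is a two-sided ideal of the semisimple quotient consisting of quasinilpotent elements, hence zero, which is exactly your observation that $\pi(d(r))\pi(a)\in Q_{A/\mathrm{rad}(A)}$ for all $a$ plus the characterization $\mathrm{rad}=\{q \mid qA\subseteq Q\}$. For the second assertion the paper simply cites the standard fact $Q_{A/\mathrm{rad}(A)}=Q_A/\mathrm{rad}(A)$ (Dales, Proposition 1.5.29(i)), which is the spectral-permanence-modulo-the-radical argument you sketch.
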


\begin{proof}
Write $\mathcal{R}$ for $\text{rad}(A)$.  Then
$\bigl(d(\mathcal{R})+\mathcal{R}\bigr)/\mathcal{R}$ is a two-sided ideal of the
semisimple Banach algebra $A/\mathcal{R}$. 
Since $d(Q)\subseteq Q$, it follows that $d(\mathcal{R})\subseteq Q$ and so
$\bigl(d(\mathcal{R})+\mathcal{R}\bigr)/\mathcal{R}$ consists of quasinilpotent
elements of $A/\mathcal{R}$. Therefore 
$\bigl(d(\mathcal{R})+\mathcal{R}\bigr)/\mathcal{R}=\{0\}$, that is,
 $d(\mathcal{R})\subseteq\mathcal{R}$.

On account of \cite[Proposition 1.5.29(i)]{Dal}, we have $Q_{A/\mathcal{R}}=Q_A/\mathcal{R}$
and this clearly implies that $D(Q_{A/\mathcal{R}})\subseteq Q_{A/\mathcal{R}}$.
\end{proof}

We need two standard results on derivations on Banach algebras (see, e.g., 
\cite[Proposition 2.7.22(ii) and Theorem 5.2.28(iii)]{Dal}).

\begin{theorem}\label{ss}
Let $d$ be a derivation on a Banach algebra $A$. 
\begin{enumerate}
\item (Sinclair) If  $d$ is continuous, then $d(P)\subseteq P$ for each primitive ideal $P$ of $A$.
\item (Johnson and Sinclair) If $A$ is semisimple, then $d$ is automatically continuous.
\end{enumerate}
\end{theorem}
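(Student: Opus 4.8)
The plan is to treat both assertions through the single device of the \emph{separating space}
\[
\mathfrak{S}(T)=\{\,y : \text{there exist } x_n\to 0 \text{ with } T x_n\to y\,\}
\]
of a linear map $T$ from $A$ into a Banach space. I will use its two standard features: $\mathfrak{S}(T)$ is a closed subspace, and, by the closed graph theorem, $T$ is continuous if and only if $\mathfrak{S}(T)=\{0\}$. I will also use the commutation and stability principles: if $R$ is continuous then $\mathfrak{S}(R\circ T)=\overline{R(\mathfrak{S}(T))}$, and any descending chain of the form $\mathfrak{S}(T)\supseteq\overline{\mathfrak{S}(T)a_1}\supseteq\overline{\mathfrak{S}(T)a_1a_2}\supseteq\cdots$ stabilises after finitely many steps (Sinclair's stability lemma).

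For part (2) I would first check that $\mathfrak{S}(d)$ is a closed two-sided ideal: if $x_n\to 0$ and $d(x_n)\to y$, then for $a\in A$ the identities $d(x_na)=d(x_n)a+x_nd(a)$ and $d(ax_n)=d(a)x_n+ad(x_n)$, together with $x_n\to 0$, force $d(x_na)\to ya$ and $d(ax_n)\to ay$, so $ya,ay\in\mathfrak{S}(d)$. The heart of the matter is to show $\mathfrak{S}(d)\subseteq\mathrm{rad}(A)$. Fix a primitive ideal $P=\ker\pi$ with $\pi$ a continuous irreducible representation on $X$. For $\xi\in X$ the map $a\mapsto\pi(d(a))\xi$ is a composition of $d$ with continuous maps, so its separating space is $\overline{\pi(\mathfrak{S}(d))\xi}$; since $\mathfrak{S}(d)$ is a left ideal, this is a closed $\pi(A)$-submodule of $X$, hence $\{0\}$ or $X$ by irreducibility. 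Feeding the ideal structure of $\mathfrak{S}(d)$ into Sinclair's stability lemma rules out the case $X$, and one concludes $\pi(\mathfrak{S}(d))=0$, i.e. $\mathfrak{S}(d)\subseteq P$. As $P$ was arbitrary, $\mathfrak{S}(d)\subseteq\mathrm{rad}(A)=\{0\}$ by semisimplicity, and $d$ is continuous. I expect the stability step—controlling how the separating space sits inside the irreducible module under the $\pi(A)$-action and extracting a genuine contradiction with irreducibility—to be the main obstacle; the rest is bookkeeping.

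For part (1), where $d$ is already continuous, I would attach to a primitive ideal $P=\ker\pi$ the continuous homomorphism $\rho(a)=\left(\begin{smallmatrix}\pi(a)&\pi(d(a))\\0&\pi(a)\end{smallmatrix}\right)$ on $X\oplus X$; equivalently, $\delta=\pi\circ d\colon A\to L(X)$ is a continuous derivation into the bimodule $L(X)$. Since $\pi(P)=0$, the ideal $P$ annihilates $L(X)$ on both sides, so $\delta(pp')=\pi(p)\delta(p')+\delta(p)\pi(p')=0$ for $p,p'\in P$; moreover $\delta(ap)=\pi(a)\delta(p)$ shows that, for each $\xi\in X$, the map $p\mapsto\delta(p)\xi$ is a continuous left-module map from $P$ into the irreducible module $X$, whose image is either $\{0\}$ or dense. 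The goal $\delta(P)=0$ thus amounts to excluding the dense case, equivalently to ruling out a nonzero off-diagonal action of $P$ in $\rho$ (a nonsplit self-extension of $X$). This upgrade from $\delta(P^2)=0$ to $\delta(P)=0$ is the crux of Sinclair's theorem and is the main obstacle; I would resolve it by the same stability-lemma analysis as in part (2), now applied to $p\mapsto\delta(p)\xi$ on the maximal modular left ideal $L_\xi=\{a:\pi(a)\xi=0\}\supseteq P$, using irreducibility to collapse the image to $\{0\}$.
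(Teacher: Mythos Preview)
The paper does not prove this theorem at all; it quotes it as a standard result and cites Dales \cite[Proposition 2.7.22(ii) and Theorem 5.2.28(iii)]{Dal}. So there is no in-paper argument to compare against, and your proposal must be judged against the classical proofs.

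Your plan for part~(2) is essentially the Johnson--Sinclair argument as presented in the literature: show $\mathfrak{S}(d)$ is a closed two-sided ideal, push it through each irreducible representation, and use the stability lemma to force $\mathfrak{S}(d)\subseteq P$ for every primitive $P$. You correctly flag the stability step as the nontrivial part, and as a plan this is fine.

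Your plan for part~(1), however, has a genuine gap. You propose to ``resolve it by the same stability-lemma analysis as in part~(2), now applied to $p\mapsto\delta(p)\xi$''. But $d$ is assumed continuous in part~(1), so $\delta=\pi\circ d$ is continuous, and the map $p\mapsto\delta(p)\xi$ is continuous as well. Its separating space is therefore $\{0\}$, and the stability lemma is vacuous: it tells you that a chain starting at $\{0\}$ stabilises, which gives no information whatsoever about whether $\delta(P)\xi$ is $\{0\}$ or dense in $X$. The separating-space machinery is a tool for controlling \emph{discontinuity}; once continuity is given, it has nothing left to say, and you are stuck at $\delta(P^2)=0$ with no way to upgrade.

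The actual proof of Sinclair's theorem is a short combinatorial estimate, not an automatic-continuity argument. For $a\in P$ one expands $d^{\,n}(a^n)$ by the Leibniz rule; the only term with no undifferentiated factor $a$ is $n!\,(d(a))^n$, and every other term lies in $P$ because $P$ is an ideal. Hence
\[
(d(a))^n + P \;=\; \tfrac{1}{n!}\,d^{\,n}(a^n) + P,
\qquad
\bigl\|(d(a))^n + P\bigr\| \;\le\; \tfrac{1}{n!}\,\|d\|^{\,n}\,\|a\|^{\,n},
\]
so $d(a)+P$ is quasinilpotent in $A/P$. Since $(d(P)+P)/P$ is then a two-sided ideal of quasinilpotent elements in the primitive (hence semisimple) algebra $A/P$, it is zero, i.e.\ $d(P)\subseteq P$. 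This is the argument you are missing; your module-theoretic reformulation via $\rho$ is correct but does not by itself supply the needed spectral input.
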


Our main tool is the Jacobson density theorem together with its extensions. First we state a version of this theorem which includes Sinclair's generalization involving invertible elements (see, e.g., \cite[Theorem 4.2.5, Corollary 4.2.6]{Ab}).

\begin{theorem}\label{JDT}
Let $\pi$ be a continuous  irreducible representation of a unital Banach algebra $A$ on a Banach space $X$. 
If $\xi_1,\ldots,\xi_n$ are linearly independent elements in $X$, and $\eta_1,\ldots,\eta_n$ are 
arbitrary elements in $X$, then there exists $a\in A$ such that $\pi(a)\xi_i = \eta_i$, $i=1,\ldots,n$. 
Moreover, if  $\eta_1,\ldots,\eta_n$  are linearly independent, then $a$ can be chosen to be invertible.
\end{theorem}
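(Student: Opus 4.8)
The plan is to prove the classical density statement first and then obtain Sinclair's invertible refinement as a short addendum. Throughout I regard $X$ as a left $A$-module via $\pi$, irreducibility meaning that $X$ is a simple module, so that $\pi(A)\xi = X$ for every $\xi\neq 0$. The first step is a Schur-type lemma: the commutant $\mathrm{End}_A(X)=\{T\in L(X): T\pi(a)=\pi(a)T \text{ for all } a\}$ is a division algebra (every nonzero module endomorphism is bijective by simplicity, with bounded inverse by the open mapping theorem), and it is a closed, hence complex Banach, subalgebra of $L(X)$; by Gelfand--Mazur it equals $\CC I$. Here I use the standard fact that for a continuous irreducible representation on a Banach space all module endomorphisms are automatically bounded. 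Consequently ``linearly independent'' in the statement coincides with linear independence over the commutant, which is the natural hypothesis for the density theorem.

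The heart of the density part is the following annihilator lemma: if $V\subseteq X$ is a finite-dimensional subspace and $\xi\notin V$, then there is $a\in A$ with $\pi(a)V=0$ and $\pi(a)\xi\neq 0$. I would prove this by induction on $\dim V$. The case $V=\{0\}$ is simplicity. For the step, write $V=W\oplus\CC\xi_0$ with $\dim W=\dim V-1$; the inductive hypothesis shows that $W$ is exactly the set of vectors annihilated by the left ideal $\mathrm{Ann}(W)=\{a:\pi(a)W=0\}$ and that $\pi(\mathrm{Ann}(W))\xi_0$, being a nonzero submodule, equals $X$. Assuming for contradiction that every $a$ killing $V$ also kills $\xi$, the assignment $\pi(a)\xi_0\mapsto\pi(a)\xi$ ($a\in\mathrm{Ann}(W)$) is a well-defined element $\tau$ of the commutant; since the commutant is $\CC I$, one gets $\pi(a)(\xi-\tau\xi_0)=0$ for all $a\in\mathrm{Ann}(W)$, whence $\xi-\tau\xi_0\in W$ and $\xi\in W+\CC\xi_0=V$, a contradiction. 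Given the lemma, the density statement follows by superposition: for each $j$ apply it with $V=\mathrm{span}\{\xi_i:i\neq j\}$ to obtain $a_j$ vanishing on the other $\xi_i$ with $\pi(a_j)\xi_j\neq 0$, use simplicity to pick $b_j$ with $\pi(b_j)\pi(a_j)\xi_j=\eta_j$, and add the elements $c_j=b_ja_j$.

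For the invertible refinement, suppose in addition that $\eta_1,\dots,\eta_n$ are linearly independent. Put $Y=\mathrm{span}\{\xi_1,\dots,\xi_n,\eta_1,\dots,\eta_n\}$, a finite-dimensional space, and choose a linear bijection $S$ of $Y$ with $S\xi_i=\eta_i$ (possible because both frames are linearly independent, so each extends to a basis of $Y$). Since $Y$ is a finite-dimensional complex space, the exponential map $\exp\colon L(Y)\to\mathrm{GL}(Y)$ is surjective, so $S=\exp(T)$ for some $T\in L(Y)$. Now apply the density statement to a basis of $Y$ to find $b\in A$ with $\pi(b)y=Ty$ for every $y\in Y$; then $\pi(b)$ leaves $Y$ invariant and $\pi(b)|_Y=T$. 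Because $A$ is unital and $\pi$ is a continuous unital homomorphism, $a=\exp(b)$ is invertible in $A$ and $\pi(a)=\exp(\pi(b))$; this operator preserves $Y$ and restricts there to $\exp(T)=S$, so $\pi(a)\xi_i=S\xi_i=\eta_i$, as desired.

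The main obstacle is the annihilator lemma together with the Schur reduction behind the density statement; once density is in hand, Sinclair's refinement is comparatively soft, its only genuine inputs being the surjectivity of the complex exponential on $\mathrm{GL}(Y)$ and the invariance of $Y$ under $\pi(b)$. The one technical point to keep honest is the automatic boundedness of module endomorphisms needed to invoke Gelfand--Mazur, and the verification that $\pi(b)Y\subseteq Y$ forces $\exp(\pi(b))|_Y=\exp(T)$.
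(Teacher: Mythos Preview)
The paper does not actually prove this theorem: it merely quotes it as a tool and refers the reader to Aupetit's \emph{A Primer on Spectral Theory}, Theorem~4.2.5 and Corollary~4.2.6. So there is no in-paper proof to compare against.

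That said, your argument is correct and is essentially the standard one found in the cited reference. The density half is the classical Jacobson argument (Schur's lemma over $\CC$ via Gelfand--Mazur, then the inductive annihilator lemma), and your invertible refinement via the exponential trick---extending $\xi_i\mapsto\eta_i$ to an automorphism $S$ of a finite-dimensional $Y$, writing $S=\exp(T)$, lifting $T$ to $\pi(b)$ by density, and setting $a=e^{b}$---is precisely Sinclair's proof as presented by Aupetit. The one point you flag, automatic boundedness of module endomorphisms so that Gelfand--Mazur applies, is indeed the only subtlety; it is a standard fact for algebraically irreducible Banach $A$-modules (see e.g.\ Bonsall--Duncan or Dales), so invoking it is legitimate even though you do not spell out its proof.
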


The next theorem is basically \cite[Theorem 4.6]{BB}, but stated in the analytic setting (alternatively, one can use \cite[Theorem 3.6]{BS} together with Theorem \ref{ss}).

\begin{theorem}\label{dd}
Let $d$ be a continuous derivation on  a Banach algebra $A$, and let
 $\pi$ be a continuous irreducible representation of $A$ on a Banach space $X$. The following statements are equivalent:
 \begin{enumerate}
 \item[(i)] There does not exist a continuous linear operator $T:X\to X$ 
such that $\pi(d(x))= T\pi(x) - \pi(x)T$  for all $x\in A$.
 \item[(ii)] If $\xi_1,\ldots,\xi_n$ are linearly independent elements in $X$, and $\eta_1,\ldots,\eta_n,\zeta_1,\ldots,\zeta_n,$ are arbitrary elements in $X$, then there exists $a\in A$ such that $$\pi(a)\xi_i = \eta_i\quad\mbox{and}\quad \pi(d(a))\xi_i = \zeta_i,\,\,\, i=1,\ldots,n.$$ 
 \end{enumerate}
\end{theorem}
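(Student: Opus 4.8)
I would prove the equivalence by treating the two implications separately; $(\mathrm{ii})\Rightarrow(\mathrm{i})$ is elementary, and $(\mathrm{i})\Rightarrow(\mathrm{ii})$ carries the weight. For $(\mathrm{ii})\Rightarrow(\mathrm{i})$, assume (ii) and suppose, towards a contradiction, that some continuous operator $T$ on $X$ satisfies $\pi(d(x))=T\pi(x)-\pi(x)T$ for all $x\in A$. Fix $\xi\in X\setminus\{0\}$. If $T\xi=\mu\xi$ for some scalar $\mu$, then $\pi(d(a))\xi=(T-\mu)\pi(a)\xi$ for every $a\in A$, so $\pi(a)\xi=0$ already forces $\pi(d(a))\xi=0$; this contradicts (ii) applied with $n=1$, $\eta_1=0$, $\zeta_1=\xi$. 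If instead $\xi$ and $T\xi$ are linearly independent, then (ii) with $n=2$, $(\xi_1,\xi_2)=(\xi,T\xi)$, $\eta_1=\eta_2=\zeta_2=0$ and $\zeta_1=\xi$ produces $a$ with $\pi(a)\xi=\pi(a)T\xi=0$ and $\pi(d(a))\xi=\xi$, whereas $\pi(d(a))\xi=T\pi(a)\xi-\pi(a)T\xi=0$. Either way we reach a contradiction, so (i) holds.

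For $(\mathrm{i})\Rightarrow(\mathrm{ii})$ the plan is the following. First, since $d$ is continuous and $\ker\pi$ is a primitive ideal, Sinclair's theorem (Theorem~\ref{ss}) gives $d(\ker\pi)\subseteq\ker\pi$, so $d$ induces a derivation of the primitive algebra $A/\ker\pi$ acting on $X$; passing to it, I may assume that $\mathrm{End}_A(X)=\CC I$ and that $\pi(A)\xi=X$ for every nonzero $\xi$. Next I would split (ii) into two steps. Step~1: for linearly independent $\xi_1,\dots,\xi_n$ and arbitrary $\eta_1,\dots,\eta_n$ there is $a\in A$ with $\pi(a)\xi_i=\eta_i$ for all $i$; this is the Jacobson density theorem (Theorem~\ref{JDT}). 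Step~2: for linearly independent $\xi_1,\dots,\xi_n$ and arbitrary $\zeta_1,\dots,\zeta_n$ there is $b\in A$ with $\pi(b)\xi_i=0$ and $\pi(d(b))\xi_i=\zeta_i$ for all $i$. Given these, (ii) follows by choosing $a$ as in Step~1 with the prescribed $\eta_i$, then $b$ as in Step~2 with $\zeta_i$ replaced by $\zeta_i-\pi(d(a))\xi_i$, and taking $a+b$.

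The content is in Step~2. Set $\mathcal L=\{b\in A:\pi(b)\xi_i=0\text{ for }i=1,\dots,n\}$, a left ideal. Since $\pi(d(cb))\xi_i=\pi(c)\pi(d(b))\xi_i$ and $cb\in\mathcal L$ for $c\in A$, $b\in\mathcal L$ (Leibniz rule), the set $\{(\pi(d(b))\xi_i)_{i=1}^n:b\in\mathcal L\}$ is an $A$-submodule of $X^n$. If it were proper then, $X^n$ being semisimple with $\mathrm{End}_A(X)=\CC I$, it would lie in the kernel of a nonzero $A$-module map $X^n\to X$; hence $\pi(d(b))\xi=0$ for all $b\in\mathcal L$, where $\xi=\sum_i\lambda_i\xi_i\neq 0$ for suitable scalars $\lambda_i$, not all zero. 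An induction on $n$ — changing basis in the span of the $\xi_i$ so that the obstruction is concentrated on a single vector, then invoking Theorem~\ref{JDT} and the case $n-1$ — would bring this down to the one-variable statement: $\pi(d(b))\xi=0$ whenever $\pi(b)\xi=0$. But that statement lets me define $T$ on $X=\pi(A)\xi$ by $T\pi(a)\xi=\pi(d(a))\xi$; it is well defined by exactly that statement, continuous by the open mapping theorem applied to the isomorphism $A/\{b:\pi(b)\xi=0\}\to X$ together with continuity of $\pi$ and $d$, and it satisfies $\pi(d(a))=T\pi(a)-\pi(a)T$ on $X$ by the Leibniz rule — contradicting (i). Hence the submodule above is all of $X^n$, which is Step~2.

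The main obstacle is precisely this inductive passage in Step~2. It amounts to a density theorem for the non-semisimple $A$-module $M=X\oplus X$ with action $a\cdot(v,w)=(\pi(a)v,\pi(d(a))v+\pi(a)w)$; note that $a\cdot(\xi,0)=(\pi(a)\xi,\pi(d(a))\xi)$, so (ii) says exactly that, for linearly independent $\xi_i$, the vectors $(\xi_i,0)$ can be moved by $A$ to arbitrary targets in $M$. Hypothesis (i) is what drives the argument: it says that the extension $0\to\{0\}\oplus X\to M\to X\to 0$ does not split by continuous module maps, from which one deduces that $\{0\}\oplus X$ is the unique proper submodule of $M$, that $M$ is cyclic, and that $\mathrm{End}_A(M)=\CC 1_M\oplus\CC N$ with $N(v,w)=(0,v)$ and $N^2=0$. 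The classical Jacobson density theorem does not apply to such an $M$, and pushing density through the submodule $\{0\}\oplus X$ is the delicate point; it is this that is carried out in \cite[Theorem~4.6]{BB}, or alternatively via \cite[Theorem~3.6]{BS} combined with Theorem~\ref{ss}.
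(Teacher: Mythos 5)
Your overall architecture matches the paper's treatment of Theorem~\ref{dd}: the paper offers no proof at all, merely the citation \cite[Theorem 4.6]{BB} (or \cite[Theorem 3.6]{BS} together with Theorem~\ref{ss}), so leaning on those references for the crux is legitimate, and the parts you do prove are correct --- the elementary implication (ii)$\Rightarrow$(i), the reduction of (i)$\Rightarrow$(ii) to your Step~2 via Theorem~\ref{JDT}, and the passage from a proper submodule $W\subseteq X^n$ to the relation ``$\pi(b)\xi_i=0$ for all $i$ implies $\pi(d(b))\xi=0$'' with $\xi=\sum_i\lambda_i\xi_i\neq 0$.

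The genuine gap is the route you sketch for finishing Step~2: the claimed induction that ``would bring this down to the one-variable statement: $\pi(d(b))\xi=0$ whenever $\pi(b)\xi=0$.'' That reduction is not just delicate, it is impossible in general. By your own construction, the one-variable statement at a single nonzero $\xi$ produces a continuous $T$ with $\pi(d(x))=T\pi(x)-\pi(x)T$; conversely, if such a $T$ exists and $\xi$, $T\xi$ are linearly independent, density gives $b$ with $\pi(b)\xi=0$ and $\pi(b)T\xi=\xi$, whence $\pi(d(b))\xi=-\pi(b)T\xi=-\xi\neq0$. So the one-variable statement holds at some vector if and only if an implementing $T$ exists \emph{and has an eigenvector}. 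Now take $A=L(X)$, $\pi$ the identity representation, and $d\colon x\mapsto Tx-xT$ with $T$ bounded and without eigenvalues (e.g.\ multiplication by the variable on $L^2[0,1]$): then (i) fails, hence (ii) fails, and your $W$ is indeed proper for $\xi_1=\xi$, $\xi_2=T\xi$, yet the one-variable statement fails at \emph{every} nonzero vector. Hence no change of basis or induction on $n$ can concentrate the obstruction on a single vector; the operator $T$ has to be built globally from the multi-vector relation (and shown to be continuous in the analytic setting), which is precisely the nontrivial content of \cite[Theorem 4.6]{BB}, alternatively \cite[Theorem 3.6]{BS} combined with Theorem~\ref{ss}. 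In short: as a reduction to the cited results your write-up is fine and parallels the paper, but the self-contained path you indicate for the key step would fail.
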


\section{Main theorem}

We now have enough information to prove the main result of the paper.

\begin{theorem}\label{MT}
Let $A$ be a Banach algebra with the property $\beta$,   and let $Q$ be the set of its quasinilpotent elements.
If a derivation $d$ of $A$ satisfies $d(Q)\subseteq Q$, then $d(A)\subseteq {\rm rad}(A)$.
\end{theorem}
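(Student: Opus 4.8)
The plan is to reduce to the semisimple case and then work representation by representation. By Lemma~\ref{ln} we may assume $A$ is semisimple, so that $d$ is automatically continuous by Theorem~\ref{ss}(ii), and by passing to the unitization as in Example~\ref{red}(2) we may assume $A$ is unital (one checks that $d$ extends to $A_{\mathbf 1}$ by $d(\mathbf 1)=0$, that $Q_{A_{\mathbf 1}}=Q_A$, and that the property $\beta$ is preserved). Now fix one of the representations $\pi=\pi_i\colon A\to L(X)$ from Definition~\ref{def1}; it suffices to show $\pi(d(A))=0$ for each such $\pi$, since then $d(A)\subseteq\bigcap_i\ker\pi_i=\mathrm{rad}(A)=\{0\}$.

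The case $\dim X=1$ is trivial: $\pi$ is then a character, $\pi\circ d$ is a derivation from $A$ to $\mathbb C$, hence (since every commutator maps to $0$) vanishes on the commutator ideal, and in fact $\pi(d(a))=\pi(d(a\cdot\mathbf 1))=\pi(d(a))\pi(\mathbf 1)+\pi(a)\pi(d(\mathbf 1))$ forces nothing directly, so instead one argues $\pi(d(a^2))=2\pi(a)\pi(d(a))$ while also $\pi(d(a^2))=\pi(d(a^2))$; cleanly, a derivation into a one-dimensional bimodule on which $A$ acts by a character must be zero because $\pi(d(a))=\pi(d(a))\pi(a)a^{-1}\cdots$—the honest statement is simply that $\mathbb C$ with the $A$-action via $\pi$ is a symmetric bimodule and any derivation into a commutative algebra's character quotient kills squares, hence the conclusion follows once we have the key dichotomy below, so we do not treat $\dim X=1$ separately.

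The heart of the argument is the case $\dim X\ge 2$. I claim that the alternative (i) in Theorem~\ref{dd} cannot hold. Suppose it does; then by Theorem~\ref{dd}(ii), for any linearly independent $\xi_1,\xi_2\in X$ and arbitrary targets we can find $a\in A$ with prescribed values of $\pi(a)$ and $\pi(d(a))$ on $\xi_1,\xi_2$. Using property $\beta$(b), pick $q\in A$ with $q^2=0$ and $\pi(q)\neq 0$; choose $\xi$ with $\eta:=\pi(q)\xi\neq 0$. The strategy is to build, from $q$ and the flexibility just described, a quasinilpotent element $x$ whose image $d(x)$ fails to be quasinilpotent — concretely, one wants to produce $x\in Q$ with $\pi(x)$ nilpotent (e.g.\ square-zero, so automatically $x$ can be arranged quasinilpotent by further adjustment, or directly exploit that $d(Q)\subseteq Q$ forces $\pi(d(x))$ to be quasinilpotent too) while $\pi(d(x))$ has a nonzero eigenvalue on the two-dimensional span $\langle\xi_1,\xi_2\rangle$. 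The transitivity of Theorem~\ref{dd}(ii) lets us independently dictate that $\pi(x)$ is, say, the rank-one nilpotent $\xi_1\mapsto0,\ \xi_2\mapsto\xi_1$ and that $\pi(d(x))$ is the identity on $\langle\xi_1,\xi_2\rangle$; but then one must still ensure $x$ itself is quasinilpotent in $A$, not merely that $\pi(x)$ is nilpotent. This last point is the main obstacle, and the way around it is presumably to feed such an $x$ together with the square-zero element $q$ into a Katavolos–Stamatopoulos type trick: replace $x$ by $q x q$ or by a commutator with $q$ so that nilpotence in every representation (hence quasinilpotence) is forced by $q^2=0$, while $\pi(d(\cdot))$ still carries a spectral obstruction by the Leibniz rule $\pi(d(qxq))=\pi(d(q))\pi(x)\pi(q)+\pi(q)\pi(d(x))\pi(q)+\pi(q)\pi(x)\pi(d(q))$ and the freedom in choosing $\pi(x),\pi(d(x))$ on enough vectors.

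Once alternative (i) is excluded, there is a continuous $T\in L(X)$ with $\pi(d(x))=T\pi(x)-\pi(x)T$ for all $x\in A$. Then for $q\in A$ with $q^2=0$ and any $a\in A$, the element $aq$ satisfies $(aq)^2=a\,q a\, q$, which need not vanish, so instead take square-zero elements directly: for square-zero $q$, $\pi(d(q))=[T,\pi(q)]$, and since $d(q)\in Q$ we get $[T,\pi(q)]$ quasinilpotent in $L(X)$ for every square-zero $q\in A$. Combined with the density supplied by Theorem~\ref{JDT} and property $\beta$(b), which guarantees enough square-zero elements to make $\pi(Q_A^{(2)})$ (squares of square-zero elements, or rather the square-zero elements themselves) spread across $X$, one forces $T$ to be a scalar plus a quasinilpotent-compatible perturbation, and ultimately $[T,\pi(x)]$ quasinilpotent for all $x$, whence by Le~Page's theorem (inner derivations: $d(Q)\subseteq Q\Rightarrow$ range in radical) applied inside $L(X)$, the inner derivation $\mathrm{ad}\,T$ has range in $\mathrm{rad}(L(X))=\{0\}$, so $T$ is central, giving $\pi(d(A))=0$. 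Assembling over all $i\in I$ yields $d(A)\subseteq\mathrm{rad}(A)$, as claimed.
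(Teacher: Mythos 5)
Your overall skeleton matches the paper's (reduce to the semisimple unital case, fix a representation $\pi$ from Definition~\ref{def1}, and split on the dichotomy of Theorem~\ref{dd}), but at the two places where the real work happens your argument is not a proof. In Case~1 (alternative (ii) of Theorem~\ref{dd}) you yourself identify the obstacle --- the element you build with prescribed $\pi(x)$ and $\pi(d(x))$ need not be quasinilpotent in $A$ --- and then only say the fix is ``presumably'' a Katavolos--Stamatopoulos-type trick with $qxq$. That is exactly the missing idea. The paper closes this gap by using a similarity: $e^{-a}qe^{a}\in Q$ automatically, and Theorem~\ref{dd}(ii) is used to prescribe $\pi(a)\rho=\pi(a)\omega=0$, $\pi(d(a))\rho=\omega$, $\pi(d(a))\omega=-\rho+\pi(d(q))\rho$ (plus $\pi(a)\pi(d(q))\rho=0$), where $\omega=\pi(q)\rho\neq0$; a short computation then gives $\pi\bigl(e^{a}d(e^{-a}qe^{a})e^{-a}\bigr)\rho=\rho$, so $1$ lies in the spectrum of an element of $Q$, a contradiction. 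Your $qxq$ variant is not carried out, and it is not clear the prescribed-value freedom suffices to force a nonzero spectral value of $d(qxq)$, since all three Leibniz terms are sandwiched by $\pi(q)$ or $\pi(d(q))$.

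Case~2 has a similar gap: from $\pi(d(q))=[T,\pi(q)]$ quasinilpotent for square-zero $q$ you jump, via phrases like ``quasinilpotent-compatible perturbation,'' to $[T,\pi(x)]$ quasinilpotent for all $x$, and then invoke Le~Page ``inside $L(X)$'' --- but Le~Page would need $[T,S]\in Q$ for all $S$ in a Banach algebra containing $T$, whereas you only (conjecturally) have it on $\pi(A)$. The paper argues directly: if $T\xi$ and $\xi$ were linearly independent, Sinclair's density theorem (Theorem~\ref{JDT}) gives an invertible $a$ with $\pi(a)\rho=-T\xi$, $\pi(a)\omega=\xi$, and then $c=d(aqa^{-1})\in Q$ satisfies $\pi(c)\xi=\xi$, a contradiction; hence $T\xi\in\mathbb{C}\xi$ for every $\xi$, so $T$ is scalar. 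Finally, your $\dim X=1$ discussion is circular and ends by deferring to a dichotomy argument that, as you set it up, needs two linearly independent vectors; this case is not automatic (point derivations at characters can be nonzero, e.g.\ on the disc algebra). The paper settles it using Sinclair's theorem from Theorem~\ref{ss}: $d$ is continuous and preserves the primitive ideal $P=\ker\pi$ of codimension one, and $d(\mathbf{1})=0$, so $d(A)\subseteq P$.
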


\begin{proof}
We first assume that $A$ is semisimple and has an identity element.
Obviously $d(\mathbf{1})=0$.
On account of Theorem \ref{ss}, $d$ is continuous and leaves the
primitive ideals of $A$ invariant.

Take an irreducible representation $\pi$ of $A$ on a Banach space $X$ such as in Definition \ref{def1}. 
We have to show that $\pi(d(A))=\{0\}$.

Suppose first that $\dim X= 1$. Then  $P=\ker \pi$ has codimension 1 in $A$, 
so that $A=\mathbb{C}\mathbf{1}\oplus P$. Hence
$d(A)\subseteq P$, which gives $\pi(d(A))=\{0\}$.

We now assume that $\dim X\ge 2$. According to Definition \ref{def1},
 there exists $q\in A$ such that $q^2=0$ and $\pi(q)\ne 0$. Let $\rho\in X$ be such that 
\[
\omega:=\pi(q)\rho\ne 0.
\] 
Note that $\omega$ and $\rho$ are linearly independent for $\pi(q)^2=0$. Also,
\[
\pi(q)\omega =0.
\]   
We now consider two cases. 

{\bf Case 1}. Let us first consider the possibility where conditions of  Theorem \ref{dd} are fulfilled. Then there exists $a\in A$ such that
\[
\pi(a)\rho = 0,\,\, \pi(a)\omega =0,\,\, \pi(d(a))\rho = \omega,\,\,\pi(d(a))\omega = -\rho +  \pi(d(q))\rho,
\]
and 
\[
\pi(a)\pi(d(q))\rho =0
\]
(if $\pi(d(q))\rho$ lies in the linear span of $\rho$ and $\omega$, 
then this follows from the first two identities). Note that for any $n\ge 2$,
\[
\pi(d(a^n))\rho = \pi(d(a))\pi(a)^{n-1}\rho + \cdots + \pi(a)^{n-1}\pi(d(a))\rho = 0,
\]
and, similarly,
\[
\pi(d(a^n))\omega =0.
\]
Both formulas trivially also hold for $n=0$. Consequently,
\[
\pi(d(e^a))\rho = 
\pi\Bigl(d\Bigl(\sum_{n=0}^\infty \frac{1}{n!} a^n \Bigr)\Bigr)\rho =  
\sum_{n=0}^\infty \frac{1}{n!}  \pi(d (a^n ))\rho =\pi(d(a))\rho = \omega.
\]
Similarly,
\[
\pi(d(e^a))\omega =\pi(d(a)) \omega =  -\rho +  \pi(d(q))\rho.
\]

By assumption, $d(e^{-a}qe^a)\in Q$, and hence also $e^ad(e^{-a}qe^a)e^{-a}\in Q$. Expanding $d(e^{-a}qe^a)$ according to the derivation law, and also using $e^a d(e^{-a}) + d(e^a)e^{-a} =d(\mathbf{1})=0$, 
it follows that 
\[
b:=-d(e^a)e^{-a}q + d(q) + qd(e^a)e^{-a}\in Q.
\]
However, 
\begin{align*}
\pi(b)\rho =& -\pi(d(e^a))\pi(e^{-a})\pi(q)\rho + \pi(d(q))\rho + \pi(q)\pi(d(e^a))\pi(e^{-a})\rho\\
 =  &  -\pi(d(e^a))\pi(e^{-a})\omega + \pi(d(q))\rho +  \pi(q)\pi(d(e^a))\rho\\
 =  &  -\pi(d(e^a))\omega + \pi(d(q))\rho +   \pi(q)\omega\\
 = & \rho,
\end{align*}
implying that $1\in \sigma(\pi(b))\subseteq \sigma(b)$ -- a contradiction. 
This first possibility therefore cannot occur.

{\bf Case 2}. We may now assume that  there exists a continuous linear operator $T:X\to X$ such that 
\[
\pi(d(x))= T\pi(x) - \pi(x)T
\]  
for each $x\in A$. 
Suppose there exists $\xi\in X$ such that $\xi$ and $\eta:=T\xi$ are linearly independent.  
By Theorem \ref{JDT} then  there is an invertible $a\in A$ such that $\pi(a)\rho = -\eta$ 
and $\pi(a)\omega = \xi$. Put $c:=d(aqa^{-1})$. Note that $c\in Q$ since  $aqa^{-1}\in Q$. 
However,
\begin{align*}
 \pi(c)\xi
 =&  \bigl(T\pi(a)\pi(q)\pi(a)^{-1} - \pi(a)\pi(q)\pi(a)^{-1}T \bigr)\xi\\
 =& T\pi(a)\pi(q)\omega -  \pi(a)\pi(q)\pi(a)^{-1}\eta\\
 =& \pi(a)\pi(q)\rho
 =\pi(a) \omega = \xi,
\end{align*}
and hence $1\in \sigma( \pi(c))\subseteq \sigma (c)$. 
This is a contradiction, so $T\xi$ and $\xi$ are linearly dependent for every $\xi\in X$. 
It is easy to see that this implies that $T$ is a scalar multiple of the identity, whence 
$\pi(d(A))=0$. 

Finally, we consider the case when $A$ is an arbitrary Banach algebra.
On account of Lemma~\ref{ln}, $d(\text{\rm rad}(A))\subseteq\text{\rm rad}(A)$
and therefore $d$ drops to a derivation $D$ on the semisimple Banach algebra $A/\text{\rm rad}(A)$
with the property that $D(Q_{A/\text{\rm rad}(A)})\subseteq Q_{A/\text{\rm rad}(A)}$.
According to Example~\ref{red},  $A/\text{\rm rad}(A)$ has the property $\beta$. If this Banach
algebra already has an identity element, then we apply what has previously been proved
to show that $D(A/\text{\rm rad}(A))=\{0\}$ and hence that $d(A)\subseteq\text{\rm rad}(A)$.
If  $A/\text{\rm rad}(A)$ does not have  an identity element, then we consider its unitization $B$
(considered in Example~\ref{red}) and we  extend $D$ to a derivation $\Delta$ of $B$ by 
defining $\Delta(\mathbf{1})=0$. It is clear that $Q_B=Q_{A/\text{\rm rad}(A)}$. Therefore 
$\Delta(Q_B)\subseteq Q_B$.
We thus get $\Delta(B)=\{0\}$, which implies that $D(A/\text{\rm rad}(A))=\{0\}$ 
and therefore that $d(A)\subseteq\text{\rm rad}(A)$.
\end{proof}

\begin{remark}
From the proof of Theorem \ref{MT} it is evident that in the case where $A$ is 
semisimple,  the assumption that $d(Q)\subseteq Q$ can be replaced by the milder 
assumption that $d(q)\in Q$ for every square zero element $q\in A$.
\end{remark}

\begin{corollary}
Let $A$ be a $C^*$-algebra and let $Q$ be the set of its quasinilpotent elements. If a derivation $d$ of $A$ satisfies 
$d(Q)\subseteq Q$, then $d =0$. 
\end{corollary}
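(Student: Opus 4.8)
The plan is to deduce the corollary immediately from Theorem~\ref{MT}, so the only work is to check that a $C^*$-algebra $A$ satisfies the hypotheses and that its radical is trivial. Concretely, I would argue: (1) $A$ has the property $\beta$; (2) hence $d(A)\subseteq\mathrm{rad}(A)$ by Theorem~\ref{MT}; (3) $\mathrm{rad}(A)=\{0\}$ because $A$ is semisimple, so $d=0$.

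For step (1) I would not verify Definition~\ref{def1} from scratch but instead invoke the two implications already recorded in the paper. Every $C^*$-algebra has the property $\mathbb{B}$ --- this is among the classes listed in Example~\ref{exB} and is established in \cite{ABEV} --- and Example~\ref{exB} shows that property $\mathbb{B}$ implies property $\beta$. Thus $A$ has the property $\beta$, and since the given derivation $d$ satisfies $d(Q)\subseteq Q$, Theorem~\ref{MT} applies verbatim and gives $d(A)\subseteq\mathrm{rad}(A)$.

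For step (3) I would simply recall that $C^*$-algebras are semisimple: the universal representation of $A$ is faithful and annihilates $\mathrm{rad}(A)$, so $\mathrm{rad}(A)=\{0\}$. Combining with step (2) yields $d(A)=\{0\}$, i.e.\ $d=0$.

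There is essentially no obstacle in this argument; it is a genuine corollary. The only point that requires a little care is the routing through property $\mathbb{B}$: one could instead try to verify condition (b) of Definition~\ref{def1} directly by using Kadison's transitivity theorem to lift, inside a corner of $A$, two orthogonal rank-one projections of an irreducible representation on a space of dimension $\ge 2$ and then forming a suitable square-zero product, but arranging the lifted projections to be \emph{exactly} orthogonal in $A$ (rather than just in the image) is somewhat laborious, which is why appealing to the already-proved chain $C^*\Rightarrow\mathbb{B}\Rightarrow\beta$ is the economical choice.
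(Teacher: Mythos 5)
Your proposal is correct and follows exactly the route the paper intends: $C^*$-algebras have property $\mathbb{B}$ (Example~\ref{exB}, citing \cite{ABEV}), hence property $\beta$, so Theorem~\ref{MT} gives $d(A)\subseteq\mathrm{rad}(A)=\{0\}$ by semisimplicity of $C^*$-algebras. Nothing further is needed.
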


\begin{corollary}
Let $G$ be a locally compact group and let $Q$ be the set of the quasinilpotent elements of  $L^1(G)$. 
If a derivation $d$ of  $L^1(G)$ satisfies $d(Q)\subseteq Q$, then $d =0$. 
\end{corollary}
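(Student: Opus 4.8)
The plan is to obtain this corollary as an immediate consequence of Theorem \ref{MT}, once its hypotheses are verified for the group algebra and the radical is identified. Concretely, the argument rests on two classical facts about $L^1(G)$: that it has the property $\beta$, and that it is semisimple. Neither requires any new work beyond Theorem \ref{MT} itself.

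First I would record that $L^1(G)$ has the property $\beta$. By \cite{ABEV}, the group algebra of an arbitrary locally compact group has the property $\mathbb{B}$, and, as established in Example \ref{exB}, the property $\mathbb{B}$ implies the property $\beta$. Hence $L^1(G)$ falls within the scope of Theorem \ref{MT}. Applying that theorem with $A=L^1(G)$, the assumption $d(Q)\subseteq Q$ yields at once the inclusion $d\bigl(L^1(G)\bigr)\subseteq {\rm rad}\bigl(L^1(G)\bigr)$.

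Finally I would invoke the classical theorem that $L^1(G)$ is semisimple for every locally compact group $G$, that is, ${\rm rad}\bigl(L^1(G)\bigr)=\{0\}$ (a well-known result; see, e.g., \cite{Dal} and the standard references on group algebras). Combining this with the inclusion from the previous step forces $d\bigl(L^1(G)\bigr)=\{0\}$, i.e.\ $d=0$, which is the assertion.

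The entire substance of the argument is carried by Theorem \ref{MT}; the corollary adds only the task of checking its hypotheses. The single point needing external input is the semisimplicity of $L^1(G)$, and this is the only place where one might conceivably stumble. Even there, however, no genuine obstacle arises, since the vanishing of the radical of $L^1(G)$ is a classical fact valid for every locally compact group.
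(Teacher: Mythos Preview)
Your argument is correct and is exactly the approach the paper intends: the corollary is stated without proof because it follows immediately from Theorem~\ref{MT} together with Example~\ref{exB} (so that $L^1(G)$ has property~$\beta$) and the classical semisimplicity of $L^1(G)$.
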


\end{document}